\documentclass[11pt]{article}
\usepackage[margin=1.15in]{geometry}
\usepackage{amsmath,amssymb,amsthm,mathtools,array}
\usepackage{microtype}
\usepackage[colorlinks=true,linkcolor=blue,citecolor=blue,urlcolor=blue]{hyperref}

\newtheorem{theorem}{Theorem}[section]
\newtheorem{proposition}[theorem]{Proposition}
\newtheorem{lemma}[theorem]{Lemma}
\newtheorem{corollary}[theorem]{Corollary}

\newtheorem{question}[theorem]{Open question}

\newcommand{\D}{\mathrm D}
\newcommand{\C}{\mathbb C}

\title{Monomial Moving Zeros for the Heat Semigroup:\\
An Explicit Finite-Order Differential Hierarchy}
\author{Gerardo Hern\'andez-del-Valle\\[2mm]
\small Centro de Estudios Monetarios Latinoamericanos (CEMLA), Mexico City,
Mexico\\[-1mm]
\small \texttt{ghernandez@cemla.org}}
\date{}

\begin{document}
\maketitle

\begin{abstract}
Let $P_t=\exp(t\D^2/2)$ be the heat semigroup.  For every integer
$k\geq2$ and parameter $\rho\in\C$, we construct an entire function
$\Gamma_{\rho,k}$ whose heat evolution has the prescribed moving zero
$x=\rho t^k$ and constant spatial flux there:
\[
 (P_t\Gamma_{\rho,k})(\rho t^k)=0,
 \qquad
 \partial_x(P_t\Gamma_{\rho,k})(\rho t^k)=1.
\]
The initial function is explicit,
\[
 \Gamma_{\rho,k}(x)
 =x\sum_{q\geq0}\frac{(-2^k\rho x^{2k-1})^q}{q!}
       \frac{(kq)!}{((2k-1)q+1)!},
\]
and is a generalized hypergeometric function whose entire-function order is
strictly below two.
Most importantly, it is annihilated by an explicit polynomial-coefficient
differential operator of order $2k-1$.  Thus every monomial moving boundary
admits a finite-order stationary realization after a natural boundary-flux
selection, even though the naive order-$k$ family
$\D^k+a x^{k-1}$ already fails to produce the exact boundary at $k=4$.
No minimality claim is made: the result gives a universal finite-order
hierarchy, while the least possible differential order remains open and
depends on the choice made inside the inverse kernel.
\end{abstract}

\medskip
\noindent\textbf{2020 Mathematics Subject Classification.}
Primary 35K05, 34A25; Secondary 35R37, 33C20.

\medskip
\noindent\textbf{Keywords.}
Heat semigroup, moving zero, prescribed boundary, boundary flux,
polynomial differential operator, generalized hypergeometric function,
compatibility matrix.

\section{Introduction}

Consider the heat equation
\begin{equation}\label{eq:heat}
 \partial_t v=\frac12\partial_x^2v,
 \qquad v(t,x)=(P_t\gamma)(x),
 \qquad P_t=\exp\!\left(\frac t2\D^2\right).
\end{equation}
The transported-zero problem asks for curves $f$ along which
\begin{equation}\label{eq:trace}
 (P_t\gamma)(f(t))=0.
\end{equation}
It arose for us from the construction of heat solutions associated with
Brownian boundary-crossing problems; see \cite{HDVGuerra2024}.  Moving
boundary traces and the associated Dirichlet-to-Neumann problem also have an
established analytic theory; see, for example, \cite{DeLilloFokas}.  The
classical heat-polynomial framework provides related background for the
action of the heat semigroup on polynomial data \cite{RosenbloomWidder}.  A direct
approach begins with a stationary differential equation for $\gamma$,
transports its differential operator through $P_t$, and derives conditions
on $f$.  Compatibility matrices, auxiliary Heat--Airy flows, and stationary
grading provide systematic versions of this procedure
\cite{HDVMatrix,HDVAiry,HDVGrading}.

The low-degree examples suggest the sparse family
\begin{equation}\label{eq:naive}
 \D^k+a x^{k-1}.
\end{equation}
After normalization, this family produces exact quadratic and cubic moving
zeros.  Stationary grading shows, however, that at $k=4$ the predicted
quartic is only the leading term of the boundary: higher graded corrections
are unavoidable for the relevant stationary modes \cite{HDVGrading}.
This failure motivates reversing the problem.  One prescribes $f$ and seeks
an initial function in the inverse kernel
\[
 \mathcal K(f)=\{\gamma:(P_t\gamma)(f(t))=0\}.
\]
The kernel is generally large, so the boundary alone does not choose a
unique stationary function or a unique differential equation
\cite{HDVInverse}.  The additional condition used here is the constant-flux
normalization
\begin{equation}\label{eq:fluxdata}
 (P_t\Gamma)(f(t))=0,
 \qquad \partial_x(P_t\Gamma)(f(t))=1.
\end{equation}

Our principal result is that for every monomial $f(t)=\rho t^k$, the
normalization \eqref{eq:fluxdata} selects an explicit entire function that
is annihilated by a finite-order differential operator.  Set
\[
 N=2k-1,
 \qquad \vartheta=x\D.
\]

\begin{theorem}[Monomial constant-flux hierarchy]\label{thm:main}
For every integer $k\geq2$ and $\rho\in\C$, define
\begin{equation}\label{eq:Gamma}
 \Gamma_{\rho,k}(x)
 =x\sum_{q=0}^{\infty}
 \frac{(-2^k\rho x^{N})^q}{q!}
 \frac{(kq)!}{(Nq+1)!}.
\end{equation}
Then:
\begin{enumerate}
\item $\Gamma_{\rho,k}$ is entire of order $N/k=2-1/k<2$ when
      $\rho\ne0$;
\item the heat series $V_{\rho,k}(t,x)=P_t\Gamma_{\rho,k}(x)$ converges
      locally uniformly on $\C^2$, solves
      $\partial_tV_{\rho,k}=\partial_x^2V_{\rho,k}/2$ with
      $V_{\rho,k}(0,x)=\Gamma_{\rho,k}(x)$, and satisfies
      \begin{equation}\label{eq:boundary-identities}
      (P_t\Gamma_{\rho,k})(\rho t^k)=0,
      \qquad
      \partial_x(P_t\Gamma_{\rho,k})(\rho t^k)=1;
      \end{equation}
\item $\Gamma_{\rho,k}$ satisfies the order-$N$ equation
      \begin{equation}\label{eq:mainode}
      \left\{
      N^{k-1}\D^{N}
      +k2^k\rho
      \prod_{r=1}^{k-1}
      \bigl[k\vartheta+Nr-k\bigr]
      \right\}\Gamma_{\rho,k}=0.
      \end{equation}
\end{enumerate}
\end{theorem}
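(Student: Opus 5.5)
The plan is to work throughout with the coefficient form
\[
\Gamma_{\rho,k}(x)=\sum_{q\ge0}c_q x^{Nq+1},\qquad c_q=\frac{(-2^k\rho)^q(kq)!}{q!\,(Nq+1)!},
\]
and to handle the three items in the order (1), (3), (2). Items (1) and (3) are coefficient computations. Item (2) contains the real content.

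For (1), Stirling's formula gives $\log|c_q|\sim -(N-k+1)q\log q + O(q) = -k q\log q+O(q)$, since $1+k-N-1=-(k-1)$ plus the $q!$ gives exponent $-k$ in total. The coefficient of $x^n$ with $n=Nq+1$ therefore behaves like $\exp(-\tfrac{k}{N}n\log n+O(n))$. The standard order formula $\limsup n\log n/\log(1/|a_n|)$ then gives order exactly $N/k$ when $\rho\neq0$.

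For (3), I apply the operator termwise. We have $\D^N x^{Nq+1}=\frac{(Nq+1)!}{(N(q-1)+1)!}x^{N(q-1)+1}$ for $q\ge1$, and this vanishes for $q=0$. Also $\vartheta x^{Nq+1}=(Nq+1)x^{Nq+1}$, so
\[
\prod_{r=1}^{k-1}[k(Nq+1)+Nr-k]=N^{k-1}\prod_{r=1}^{k-1}(kq+r)=N^{k-1}\frac{(kq+k-1)!}{(kq)!}.
\]
Equating the coefficient of $x^{Nq+1}$ reduces (3) to the ratio identity
\[
c_{q+1}\frac{(Nq+N+1)!}{(Nq+1)!}=-k2^k\rho\,c_q\frac{(kq+k-1)!}{(kq)!}.
\]
This follows directly from $c_{q+1}/c_q=-2^k\rho\frac{(kq+k)!}{(kq)!(q+1)}\frac{(Nq+1)!}{(Nq+N+1)!}$ together with $(kq+k)!/(q+1)=k\,(kq+k-1)!$.

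For (2), I first obtain convergence. Since the order is below $2$, one has $|c_n|\le C_\varepsilon \varepsilon^n (n!)^{-1/2-\delta}$ for some $\delta>0$. Hence the heat-polynomial expansion
\[
P_t x^n=\sum_j \frac{n!}{j!(n-2j)!}(t/2)^jx^{n-2j}
\]
summed against the coefficients converges absolutely and locally uniformly on $\C^2$. Termwise differentiation is then legitimate, so $V_{\rho,k}$ solves the heat equation with the correct initial trace.

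For the boundary identities I will avoid direct substitution of $x=\rho t^k$ into the double sum. Instead I pass to the moving frame $y=x-\rho t^k$ and seek $W(t,y)=\sum_{m\ge0}a_m(t)y^m$ with $a_0\equiv0$ and $a_1\equiv1$. The heat equation becomes $W_t-k\rho t^{k-1}W_y=\tfrac12W_{yy}$, that is,
\[
a_m'-k\rho t^{k-1}(m+1)a_{m+1}=\tfrac12(m+2)(m+1)a_{m+2}.
\]
This recursion determines every $a_m$ uniquely as a polynomial in $t$; for example $a_2=-k\rho t^{k-1}$. By weighted homogeneity ($y$ of weight $1$, $t$ of weight $2$, $\rho$ of weight $1-2k=-N$) only powers $y^{Nq+1}$ survive at $t=0$. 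The main step, and the expected obstacle, is to show $a_{Nq+1}(0)=c_q$. I would try first to track the single chain of the recursion that contributes at $t=0$: each of the $q$ steps lowers the $t$-degree by differentiating $t^{kq}$-type terms. This should produce the product $\prod(kq)!/(Nq+1)!\cdot 2^{kq}/q!$.

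Should this bookkeeping become unwieldy, the fallback is direct substitution. I would substitute $x=\rho t^k$ into $\sum c_q P_tx^{Nq+1}$, group terms by the $t$-power $k(Nq+1)-Nj$, and prove the vanishing of each group by writing $(kq)!/(Nq+1)!$ as a Beta integral and recognising a terminating hypergeometric sum of Chu--Vandermonde type. The flux identity is handled the same way.

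Once $W(0,\cdot)=\Gamma_{\rho,k}$ is established, I must identify $W(t,x-\rho t^k)$ with $P_t\Gamma_{\rho,k}$. Both are heat solutions with the same entire initial data of order below $2$, so they coincide by uniqueness of the heat series in this growth class. Convergence of $W$ can also be inferred a posteriori from this identification. The identities $W(t,0)=0$ and $W_y(t,0)=1$ then give \eqref{eq:boundary-identities}.
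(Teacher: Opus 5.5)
Items (1) and (3) are correct and match the paper's argument: the Stirling count of $c_q$, and the cancellation from the coefficient ratio. Your heat-polynomial convergence argument is a valid alternative to the paper's Cauchy-estimate lemma. It goes through because $|P_tx^n|\le\sqrt{n!}\,(|x|+\sqrt{2|t|})^n$ is beaten by $\varepsilon^n(n!)^{-1/2-\delta}$, and it also gives absolute convergence of the double series.

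\textbf{The gap: the boundary identities.} You flag these as the main step but do not prove them. Your moving-frame recursion is exactly the paper's jet recursion \eqref{eq:jetrec} with $q_n=n!\,a_n$, so it only shows that the constant-flux datum is unique. The claim $a_{Nq+1}(0)=c_q$ is the whole content of the identities in disguise. The method you propose for it rests on a false premise: there is no single contributing chain.

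Each step either multiplies by $-k\rho t^{k-1}(m+1)$ (index $+1$; call it M) or differentiates (index $+2$; call it D). Every ordering of $q$ M-steps and $q(k-1)$ D-steps that never differentiates a constant contributes, all with sign $(-1)^q$. Already for $k=2$ one gets $a_5=\tfrac45\rho^2t+\tfrac{32}{15}\rho^4t^4$. Here $\tfrac45=\tfrac1{10}\bigl(\tfrac{16}{3}+\tfrac83\bigr)$ comes from the two orderings MMD and MDM, and only their sum yields $a_7(0)=\tfrac1{21}a_5'(0)=\tfrac{4}{105}\rho^2=c_2$. Evaluating this weighted path sum for general $k$ and $q$ is a genuine combinatorial identity that your proposal does not supply.

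\textbf{Your fallback.} Direct substitution is the paper's actual route, but the mechanism you name is not the one that works. Group by $t^{k+Nm}$, equivalently by $\rho^{2m+1}$ with $j=kq-m$. The factor $(Nq+1)!$ then cancels against the falling factorial from $\D^{2j}x^{Nq+1}$, so no Beta integral is needed. The group coefficient is
\[
\frac{2^m m!}{(2m+1)!}\sum_{q=0}^{2m+1}(-1)^q\binom{2m+1}{q}\binom{kq}{m}.
\]
For $k\ge2$ this is not a Chu--Vandermonde sum. It vanishes because it is a $(2m+1)$-st finite difference of the degree-$m$ polynomial $q\mapsto\binom{kq}{m}$. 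That polynomial also vanishes automatically when $kq<m$, which encodes the constraint $j\ge0$. The flux coefficient is the same with $2m$ in place of $2m+1$, so it is zero for $m\ge1$ and equal to $1$ at $m=0$. This finite-difference observation is the missing idea.

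\textbf{A smaller point.} If you keep the moving-frame route, you must identify the formal $W$ with $V(t,y+\rho t^k)$. Do this by formal uniqueness of the Cauchy problem in $t$ for $W_t=\tfrac12W_{yy}+k\rho t^{k-1}W_y$: the coefficient of $t^{n+1}$ is determined by lower ones. Do not use a growth-class uniqueness theorem, since that already presupposes that $W$ converges.
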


The theorem does not assert that $N=2k-1$ is minimal.  In fact, the
quadratic and cubic boundaries also arise from different selections obeying
lower-order equations.  The point is instead that the obstruction to the
particular order-$k$ ansatz \eqref{eq:naive} is not an obstruction to
finite-order differential structure.  The inverse construction reveals the
operator rather than requiring it to be guessed in advance.

\section{Boundary jets and the selection principle}

Let $v$ solve \eqref{eq:heat} and put
\[
 q_n(t)=\partial_x^n v(t,f(t)).
\]
Differentiation along the curve gives the elementary recursion
\begin{equation}\label{eq:jetrec}
 q_{n+2}(t)=2q_n'(t)-2f'(t)q_{n+1}(t),
 \qquad n\geq0.
\end{equation}
Consequently, the two traces $q_0$ and $q_1$ formally determine every
spatial jet of $v$ on the curve.  For \eqref{eq:fluxdata},
\[
 q_0(t)=0,\qquad q_1(t)=1.
\]
At $t=0$, the jet values are the Taylor coefficients of the initial datum.
Thus constant flux is a genuine selection principle inside $\mathcal K(f)$.
For a general curve the resulting recursion need not have a compact closed
form.  The monomial case does, and the formula is precisely
\eqref{eq:Gamma}.

\begin{proposition}\label{prop:uniqueness}
Among formal power series at the origin, the two identities
\eqref{eq:boundary-identities} determine at most one initial function.
Hence \eqref{eq:Gamma}, once the identities are proved, is the unique formal
constant-flux selection for $f(t)=\rho t^k$.
\end{proposition}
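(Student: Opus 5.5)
The plan is to show that the two traces force every Taylor coefficient of the initial datum, using the jet recursion \eqref{eq:jetrec} evaluated at $t=0$. Let $\gamma=\sum_n c_n x^n/n!$ be a formal power series. Working formally, set $v(t,x)=\sum_{m\ge0}\frac{t^m}{2^m m!}\D^{2m}\gamma(x)$, a formal power series in $(t,x)$. Composing with $x=\rho t^k$ gives well-defined formal series in $t$: since $k\ge1$, each coefficient of $t^j$ involves only finitely many terms. Define $q_n(t)=\partial_x^n v(t,\rho t^k)\in\C[[t]]$. The recursion \eqref{eq:jetrec} holds as an identity of formal power series, because it only uses $\partial_t v=\frac12\partial_x^2v$ and the chain rule, both valid formally.

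First I will establish that $q_n(0)=c_n$. This holds because $f(0)=0$ and $v(0,x)=\gamma(x)$, so the constant term of $q_n$ is $\gamma^{(n)}(0)=c_n$.

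Next, suppose $\gamma$ and $\tilde\gamma$ both satisfy \eqref{eq:boundary-identities}. By linearity, their difference $\delta$ has traces $q_0\equiv0$ and $q_1\equiv0$ as formal series. Induction on $n$ via \eqref{eq:jetrec} then gives $q_n\equiv0$ for all $n$: if $q_n$ and $q_{n+1}$ vanish identically, so do $q_n'$ and $f'q_{n+1}$, and hence $q_{n+2}$ vanishes. Evaluating at $t=0$ gives $c_n(\delta)=0$ for all $n$, so $\delta=0$. This proves that there is at most one formal initial function.

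For the second sentence of the statement, no further work is needed. Once part 2 of Theorem~\ref{thm:main} is proved, $\Gamma_{\rho,k}$ is a formal series satisfying the identities, so it is the unique one.

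The only delicate point is justifying that composing the formal heat series with $x=\rho t^k$ and differentiating in $t$ are legitimate operations. This is handled by noting that the $t$-degree of the terms of $\frac{t^m}{2^m m!}\D^{2m}\gamma$ evaluated at $x=\rho t^k$ is at least $m$, so the sum is $t$-adically convergent. Consequently, termwise differentiation and the chain rule commute with the summation.
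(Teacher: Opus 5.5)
Your proposal is correct and follows the paper's argument: the recursion \eqref{eq:jetrec} propagates the two traces to every jet $q_n$, and evaluating at $t=0$, where $f(0)=0$, recovers each Taylor coefficient $\Gamma^{(n)}(0)$. Taking the difference of two solutions and justifying the formal composition and chain rule $t$-adically are more careful versions of the same proof, which the paper states without these details.
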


\begin{proof}
Starting with $q_0=0$ and $q_1=1$, recursion \eqref{eq:jetrec} determines
$q_n$ successively.  Evaluating at $t=0$ determines every derivative
$\Gamma^{(n)}(0)$ and therefore the formal Taylor series.
\end{proof}

\section{Proof of the boundary identities}

We use the standard finite-difference fact
\begin{equation}\label{eq:fd}
 \sum_{q=0}^{r}(-1)^q\binom rq P(q)=0
 \quad\text{whenever}\quad \deg P<r.
\end{equation}

\begin{proposition}\label{prop:formaltrace}
The formal heat transform of \eqref{eq:Gamma} satisfies
\eqref{eq:boundary-identities}.
\end{proposition}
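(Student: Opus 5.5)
The plan is to expand the heat transform of \eqref{eq:Gamma} term by term, substitute $x=\rho t^k$, and regroup the resulting double sum by powers of $t$. Each regrouped coefficient should then turn out to be an alternating binomial sum of a low-degree polynomial, which vanishes by \eqref{eq:fd}. Write $\Gamma_{\rho,k}=\sum_q c_q x^{Nq+1}$ with $c_q=(-2^k\rho)^q(kq)!/(q!(Nq+1)!)$. I will use the formal heat action on monomials,
\[
 P_t x^{n}=\sum_{0\le 2j\le n}\frac{n!}{j!\,(n-2j)!}\Bigl(\frac t2\Bigr)^j x^{n-2j},
\]
so that formally
\[
 P_t\Gamma_{\rho,k}(x)=\sum_{q,j}c_q\frac{(Nq+1)!}{j!\,(Nq+1-2j)!}\,2^{-j}t^jx^{Nq+1-2j}.
\]

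Next I substitute $x=\rho t^k$ and track exponents. The power of $t$ is $j+k(Nq+1-2j)=k+N(kq-j)$. The power of $\rho$ is $q+Nq+1-2j=2(kq-j)+1$. This suggests setting $m=kq-j$, so that every term is a multiple of $\rho^{2m+1}t^{k+Nm}$. For fixed $m$ we have $j=kq-m$ and $Nq+1-2j=2m+1-q$. Hence only $q\le 2m+1$ occurs, and each power of $t$ collects finitely many terms, which makes the regrouping legitimate at the formal level. The powers of two combine as $2^{kq}2^{-(kq-m)}=2^m$, and the coefficient of $\rho^{2m+1}t^{k+Nm}$ becomes
\[
 \frac{2^m}{(2m+1)!}\sum_{q=0}^{2m+1}(-1)^q\binom{2m+1}{q}\,\frac{(kq)!}{(kq-m)!}.
\]

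The key observation is that $(kq)!/(kq-m)!=kq(kq-1)\cdots(kq-m+1)$ is a polynomial in $q$ of degree $m$. Moreover, it automatically vanishes at those integers $q$ with $0\le kq<m$, which are exactly the $q$ excluded by the constraint $j\ge0$. So the sum may be extended over the full range $0\le q\le 2m+1$. Since $m<2m+1$, \eqref{eq:fd} kills every coefficient, including $m=0$, and therefore $(P_t\Gamma_{\rho,k})(\rho t^k)=0$.

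For the flux I differentiate in $x$ before substituting, which lowers the $x$-exponent by one. The $t$-power becomes $Nm$, the $\rho$-power becomes $2m$, and the coefficient becomes
\[
 \frac{2^m}{(2m)!}\sum_{q=0}^{2m}(-1)^q\binom{2m}{q}\frac{(kq)!}{(kq-m)!}.
\]
This vanishes for $m\ge1$ by \eqref{eq:fd}, since $m<2m$, and equals $1$ for $m=0$. That gives $\partial_x(P_t\Gamma_{\rho,k})(\rho t^k)=1$.

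The main obstacle is the index bookkeeping. I need to check that the admissible range $\{q: 0\le j,\ 2j\le Nq+1\}$ coincides, after inserting the vanishing falling-factorial terms, with the full range $0\le q\le 2m+1$ (respectively $0\le q\le 2m$). I also need to verify the power-of-two and factorial cancellations carefully. Convergence, needed to upgrade the formal identities to genuine ones, is deferred to part 2 of Theorem~\ref{thm:main}.
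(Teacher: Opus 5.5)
Your proposal is correct and is essentially the paper's own proof. You use the same substitution $j=kq-m$, the same finite regrouping by $\rho^{2m+1}t^{k+Nm}$ (resp.\ $\rho^{2m}t^{Nm}$), and the same appeal to \eqref{eq:fd}; your falling factorial $(kq)!/(kq-m)!$ is just $m!\binom{kq}{m}$, so your coefficients coincide with \eqref{eq:zero-coefficient} and \eqref{eq:flux-coefficient}. The bookkeeping you flag as the main obstacle checks out, including the vanishing of the falling factorial for $0\le kq<m$ and the separate treatment of $m=0$ in the flux.
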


\begin{proof}
Write
\[
 \Gamma_{\rho,k}(x)=\sum_{q\geq0}c_qx^{Nq+1},
 \qquad
 c_q=\frac{(-2^k\rho)^q}{q!}\frac{(kq)!}{(Nq+1)!}.
\]
Substitute this series into
\[
 (P_t\Gamma)(\rho t^k)
 =\sum_{j\geq0}\frac{t^j}{2^jj!}
 \Gamma^{(2j)}(\rho t^k).
\]
For the term indexed by $(q,j)$, the power of $\rho$ is
\[
 q+Nq+1-2j=2kq+1-2j.
\]
Writing it as $2m+1$ forces $j=kq-m$; the corresponding power of $t$ is
then $k+Nm$.  Thus the coefficient of
$\rho^{2m+1}t^{k+Nm}$ is exactly
\begin{equation}\label{eq:zero-coefficient}
 \frac{2^m m!}{(2m+1)!}
 \sum_{q=0}^{2m+1}(-1)^q\binom{2m+1}{q}\binom{kq}{m}.
\end{equation}
Here $\binom{kq}{m}=0$ when $kq<m$, which incorporates the constraint
$j\geq0$.  Since $q\mapsto\binom{kq}{m}$ is a polynomial of degree
$m<2m+1$, \eqref{eq:fd} makes \eqref{eq:zero-coefficient} vanish.

For the spatial derivative, the term indexed by $(q,j)$ contains the power
$\rho^{2kq-2j}$.  Writing it as $\rho^{2m}$ again gives $j=kq-m$, and its
power of $t$ is $Nm$.  Consequently, the coefficient of
$\rho^{2m}t^{Nm}$ in $\partial_x(P_t\Gamma)(\rho t^k)$ is
\begin{equation}\label{eq:flux-coefficient}
 \frac{2^m m!}{(2m)!}
 \sum_{q=0}^{2m}(-1)^q\binom{2m}{q}\binom{kq}{m}.
\end{equation}
For $m\geq1$, the polynomial in the summand has degree $m<2m$, so
\eqref{eq:flux-coefficient} vanishes by \eqref{eq:fd}.  When $m=0$, the
sum has the single term $q=0$ and equals one.  Therefore the zero trace and
unit-flux identities both hold formally.
\end{proof}

\section{Entirety and convergence of heat transport}

\begin{proposition}\label{prop:order}
For $\rho\ne0$, the entire function \eqref{eq:Gamma} has order
$N/k$.  In particular its order is strictly smaller than two.
\end{proposition}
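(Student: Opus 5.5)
We will compute the order directly from the Taylor coefficients, using the classical formula for an entire function $g(z)=\sum_n a_nz^n$:
\[
 \operatorname{ord}(g)=\limsup_{n\to\infty}\frac{n\log n}{-\log|a_n|}.
\]
In \eqref{eq:Gamma} the only nonzero coefficients are at $n=Nq+1$. There
\[
 |a_{Nq+1}|=|c_q|=\frac{(2^k|\rho|)^q}{q!}\,\frac{(kq)!}{(Nq+1)!}.
\]
Since $\rho\neq0$, every $c_q$ is nonzero, so the limsup runs over the full sequence $n=Nq+1$. No subsequence is lost, and this is what makes the order exact rather than merely an upper bound.

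\textbf{Step 1: the asymptotics of $-\log|c_q|$.} We apply Stirling's formula $\log m!=m\log m-m+O(\log m)$ to each factorial. This gives
\[
 -\log|c_q| = q\log q+(Nq+1)\log(Nq+1)-kq\log(kq)+O(q).
\]
The constants $2^k|\rho|$, $k\log k$, $N\log N$ and the linear Stirling terms are all absorbed into $O(q)$. Collecting the terms of the form $q\log q$ gives
\[
 -\log|c_q|=(1+N-k)\,q\log q+O(q)=k\,q\log q+O(q),
\]
because $1+N-k=k$ when $N=2k-1$. In particular $|c_q|^{1/q}\to0$, so the series defines an entire function; this confirms the entirety that was implicitly assumed.

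\textbf{Step 2: the ratio in the order formula.} With $n=Nq+1$ we have $n\log n=Nq\log q+O(q)$. Hence
\[
 \frac{n\log n}{-\log|a_n|}=\frac{Nq\log q+O(q)}{kq\log q+O(q)}\longrightarrow\frac Nk .
\]
This is a genuine limit, not only a limsup, so the order equals $N/k=2-1/k$. Since $k\ge2$, this value lies in $[3/2,2)$, which gives "strictly smaller than two."

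\textbf{Main obstacle.} The only real care needed is the bookkeeping of the $q\log q$ terms. One has to check that the coefficients combine to $k$ and do not cancel. That is exactly the identity $1+N-k=k$ above. Everything else is absorbed into $O(q)$ and is routine.
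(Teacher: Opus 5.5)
Your proof is correct and is essentially the paper's argument: Stirling's formula gives $-\log|c_q|=kq\log q+O(q)$ (your check that $1+N-k=k$ is the bookkeeping the paper leaves implicit), and the coefficient formula for the order, evaluated along the exponents $Nq+1$, yields $N/k$. One cosmetic point: the ratio is a genuine limit only along $n=Nq+1$; over all $n$ the vanishing coefficients contribute ratio $0$, so the order is the $\limsup$, which still equals $N/k$.
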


\begin{proof}
Stirling's formula applied to the nonzero coefficient $c_q$ gives
\[
 \log |c_q|^{-1}=kq\log q+O(q).
\]
The corresponding exponent is $Nq+1$.  The coefficient formula for the
order of an entire function therefore yields
\[
 \operatorname{ord}(\Gamma_{\rho,k})
 =\limsup_{q\to\infty}
 \frac{(Nq+1)\log(Nq+1)}{\log|c_q|^{-1}}
 =\frac Nk.
\]
The same estimate shows that the series has infinite radius of convergence.
\end{proof}

\begin{lemma}\label{lem:heat}
If $g$ is entire of order strictly smaller than two, then
\[
 \sum_{j\geq0}\frac{t^j}{2^jj!}g^{(2j)}(x)
\]
converges locally uniformly in $(t,x)\in\C^2$.
\end{lemma}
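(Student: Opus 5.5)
We will prove the lemma by a Cauchy estimate for $g^{(2j)}$ on a disc whose radius grows with $j$, and then optimize that radius. Fix $\sigma$ with $\operatorname{ord}(g)<\sigma<2$. By the definition of order there is a constant $C>0$ with
\[
 M(r):=\max_{|z|=r}|g(z)|\le C\exp(r^\sigma)\qquad\text{for all } r\ge0 .
\]
Fix a compact set $|t|\le T$, $|x|\le R$. For $|x|\le R$ and any radius $r>0$, Cauchy's inequality on the circle $|z-x|=r$ gives
\[
 |g^{(2j)}(x)|\le \frac{(2j)!}{r^{2j}}\,C\exp\bigl((R+r)^\sigma\bigr).
\]

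The key step, and the only real one, is the choice of $r=r_j$. We take $r_j=j^{1/\sigma}$, a natural choice since it balances $r^{2j}$ against $e^{r^\sigma}$. For large $j$ we have $(R+r_j)^\sigma\le r_j^\sigma+c\,r_j^{\sigma-1}\le 2j$, which gives
\[
 |g^{(2j)}(x)|\le C\,(2j)!\,j^{-2j/\sigma}e^{2j}.
\]
Inserting this into the $j$-th term of the series, we use $(2j)!/j!\le 4^j j^j$ to obtain
\[
 \frac{|t|^j}{2^jj!}|g^{(2j)}(x)|\le C\,(2eT)^{j}\, e^{2j}\, j^{\,j(1-2/\sigma)}.
\]
Since $\sigma<2$, the exponent $1-2/\sigma$ is negative. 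Hence $j^{j(1-2/\sigma)}$ decays superexponentially and beats the geometric factor $(2e^3T)^j$. The terms are therefore dominated by a summable sequence that does not depend on $(t,x)$ in the compact set.

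Local uniform convergence then follows from the Weierstrass M-test. Each term is entire in $(t,x)$, so the sum is also holomorphic on $\C^2$. The main thing to check carefully is the inequality $(R+r_j)^\sigma\le 2j$ for large $j$, which reduces to $(1+R/r_j)^\sigma\to1$. Any fixed bound $\le Aj$ would serve equally well, since it only changes the geometric factor. The strictness $\sigma<2$ is exactly what gives the decay; when $\sigma=2$ the series in general converges only for small $|t|$.
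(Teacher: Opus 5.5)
Your proposal is correct and follows the paper's route: a Cauchy estimate on a circle of radius $j^{1/\sigma}$ with $\operatorname{ord}(g)<\sigma<2$ bounds the $j$th term by $\exp\bigl((1-2/\sigma)j\log j+O(j)\bigr)$ uniformly on compacts, which gives normal convergence. The differences are cosmetic: you use the elementary bound $(2j)!/j!\le(2j)^j\le 4^j j^j$ in place of Stirling's formula, and you spell out the shift to $|z|\le R+r_j$ (your stray factor $e^j$ is harmless).
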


\begin{proof}
Choose $\sigma<2$ larger than the order of $g$.  On every fixed compact
$x$-set, Cauchy's estimate on circles of radius $R$ gives
\[
 |g^{(2j)}(x)|\leq (2j)!R^{-2j}C\exp(CR^\sigma).
\]
Taking $R=j^{1/\sigma}$ and using Stirling's formula, the logarithm of the
$j$th heat-series term is bounded above by
\[
 \left(1-\frac{2}{\sigma}\right)j\log j+O(j),
\]
uniformly for $(t,x)$ in compact sets.  Its leading coefficient is negative,
which proves normal convergence.
\end{proof}

Proposition \ref{prop:formaltrace}, Proposition \ref{prop:order}, and Lemma
\ref{lem:heat} prove parts (1) and (2) of Theorem \ref{thm:main}; normal
convergence promotes the formal identities to analytic identities.  Since
the sum is normally convergent, its derivatives may be taken term by term;
shifting the summation index then gives
\[
 \partial_t(P_t\Gamma_{\rho,k})
 =\frac12\partial_x^2(P_t\Gamma_{\rho,k}),
 \qquad P_0\Gamma_{\rho,k}=\Gamma_{\rho,k}.
\]

\begin{corollary}[Heat-kernel realization]\label{cor:convolution}
For real $t>0$ and $x\in\mathbb R$, the entire heat solution in
Theorem~\ref{thm:main} agrees with the Gaussian convolution
\begin{equation}\label{eq:convolution}
 V_{\rho,k}(t,x)
 =\frac{1}{\sqrt{2\pi t}}
 \int_{\mathbb R}
 \exp\!\left(-\frac{(x-y)^2}{2t}\right)
 \Gamma_{\rho,k}(y)\,dy.
\end{equation}
In particular, the convolution in \eqref{eq:convolution} vanishes at
$x=\rho t^k$ whenever $\rho\in\mathbb R$.
\end{corollary}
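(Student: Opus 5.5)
The plan is to show that the Gaussian integral in \eqref{eq:convolution} and the heat series $V_{\rho,k}(t,x)=\sum_{j\ge0}\frac{t^j}{2^jj!}\Gamma_{\rho,k}^{(2j)}(x)$ are the same entire function of $x$ for each fixed real $t>0$. The vanishing statement then follows at once from part (2) of Theorem~\ref{thm:main}.

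First I will check that the integral converges absolutely and defines an entire function of $x$. By Proposition~\ref{prop:order}, $\Gamma_{\rho,k}$ has order $N/k<2$. So for any $\sigma$ with $N/k<\sigma<2$ there is $C$ with $|\Gamma_{\rho,k}(y)|\le C\exp(C|y|^\sigma)$. The Gaussian factor $\exp(-(x-y)^2/2t)$ decays like $\exp(-y^2/2t)$, uniformly for $x$ in compact complex sets, and this beats $\exp(C|y|^\sigma)$. Dominated convergence together with Morera's theorem then shows that the integral is entire in $x\in\C$.

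Next comes the identification. I will substitute $y=x+\sqrt t\,z$ and expand $\Gamma_{\rho,k}(x+\sqrt t z)=\sum_n \Gamma_{\rho,k}^{(n)}(x)(\sqrt t z)^n/n!$. Integrating term by term against the standard normal density, the odd moments vanish and $\int z^{2j}\,d\gamma(z)=(2j)!/(2^jj!)$. This produces exactly $\sum_j \frac{t^j}{2^jj!}\Gamma_{\rho,k}^{(2j)}(x)$.

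The main obstacle is justifying the exchange of sum and integral. I will bound the absolute series by Cauchy estimates: $\sum_n|\Gamma^{(n)}(x)||w|^n/n!\le \sup_{|u-x|\le |w|+1}|\Gamma(u)|\cdot\sum_n (|w|/(|w|+1))^n$, where $w=\sqrt t z$. This is at most $C'(|w|+1)\exp(C(|x|+|w|+1)^\sigma)$. Since $\sigma<2$, this dominating function is integrable against $e^{-z^2/2}$, so Fubini--Tonelli applies. An alternative route would be to note that both sides solve the heat equation with the same initial data and invoke a Tychonoff-type uniqueness under growth $\exp(C|x|^\sigma)$. The direct moment computation avoids that machinery, so I will use it.

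Finally, when $\rho\in\mathbb R$ and $t>0$ the point $x=\rho t^k$ is real. Part (2) of Theorem~\ref{thm:main} gives $V_{\rho,k}(t,\rho t^k)=0$, and therefore the convolution vanishes there.
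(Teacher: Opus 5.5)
Your proposal is correct and follows essentially the same route as the paper. You Taylor-expand $\Gamma_{\rho,k}(x+\sqrt t\,z)$, integrate term by term against the standard Gaussian using $\mathbb E[Z^{2j}]=(2j)!/(2^jj!)$, and then invoke the boundary identity. The one difference is how you justify the interchange: you dominate the majorant series pointwise via a Cauchy circle of radius $|w|+1$, whereas the paper bounds the coefficients with $R=n^{1/\sigma}$ and compares them with the growth of $\mathbb E|Z|^n$, and both yield the same Tonelli bound.
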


\begin{proof}
Because $\Gamma_{\rho,k}$ has order strictly below two, its growth on the
real axis is dominated by $C\exp(C|y|^\sigma)$ for some $\sigma<2$.
The Gaussian integral therefore converges absolutely.  Expanding
$\Gamma_{\rho,k}(x+\sqrt{t}\,z)$ into its Taylor series and integrating
term by term against the standard Gaussian is justified as follows.  For
$\sigma<2$ larger than the order, Cauchy's estimate with
$R=n^{1/\sigma}$ gives, locally uniformly in $x$,
\[
 \frac{|\Gamma_{\rho,k}^{(n)}(x)|}{n!}
 \leq \exp\!\left(-\frac{n}{\sigma}\log n+O(n)\right).
\]
On the other hand,
$\mathbb E|Z|^n=\exp((n/2)\log n+O(n))$.  Hence the series of integrated
absolute values converges because $1/2-1/\sigma<0$.  The odd moments vanish and
$\mathbb E[Z^{2j}]=(2j)!/(2^jj!)$, giving
\[
 \mathbb E\bigl[\Gamma_{\rho,k}(x+\sqrt{t}\,Z)\bigr]
 =\sum_{j\geq0}\frac{t^j}{2^jj!}
 \Gamma_{\rho,k}^{(2j)}(x)
 =V_{\rho,k}(t,x).
\]
The last assertion follows from \eqref{eq:boundary-identities}.
\end{proof}

\section{The finite-order annihilator}

\begin{proposition}\label{prop:ode}
The function \eqref{eq:Gamma} satisfies \eqref{eq:mainode}.
\end{proposition}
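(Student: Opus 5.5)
The plan is to verify \eqref{eq:mainode} coefficientwise on the series $\Gamma_{\rho,k}=\sum_{q\ge0}c_qx^{Nq+1}$, using the fact that both operators in \eqref{eq:mainode} act diagonally on monomials up to a shift of degree. The Euler operator satisfies $\vartheta x^n=nx^n$, so
\[
\prod_{r=1}^{k-1}\bigl[k\vartheta+Nr-k\bigr]x^{Nq+1}
=\prod_{r=1}^{k-1}\bigl[k(Nq+1)+Nr-k\bigr]x^{Nq+1}
=N^{k-1}\prod_{r=1}^{k-1}(kq+r)\,x^{Nq+1}.
\]
The second equality uses $k(Nq+1)+Nr-k=kNq+Nr=N(kq+r)$. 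On the other side, $\D^{N}x^{N(q+1)+1}=\frac{(N(q+1)+1)!}{(Nq+1)!}x^{Nq+1}$. The term $q=0$ of $\Gamma$ is $x$, and $\D^N$ kills it because $N\ge3$. Hence the coefficient of $x^{Nq+1}$ in the left side of \eqref{eq:mainode} is
\[
N^{k-1}\Bigl[c_{q+1}\frac{(Nq+N+1)!}{(Nq+1)!}+k2^k\rho\,c_q\prod_{r=1}^{k-1}(kq+r)\Bigr],
\]
and these are the only monomials that occur.

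It then suffices to check the two-term recurrence
\[
c_{q+1}\frac{(Nq+N+1)!}{(Nq+1)!}=-k2^k\rho\prod_{r=1}^{k-1}(kq+r)\,c_q .
\]
From the explicit formula for $c_q$,
\[
\frac{c_{q+1}}{c_q}=\frac{-2^k\rho}{q+1}\cdot\frac{(kq+k)!}{(kq)!}\cdot\frac{(Nq+1)!}{(Nq+N+1)!}.
\]
Since $\frac{(kq+k)!}{(kq)!}=(kq+k)\prod_{r=1}^{k-1}(kq+r)$ and $\frac{kq+k}{q+1}=k$, this ratio equals exactly $-k2^k\rho\prod_{r=1}^{k-1}(kq+r)\,\frac{(Nq+1)!}{(Nq+N+1)!}$, which is the recurrence. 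So every coefficient vanishes.

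Term-by-term differentiation is justified because $\Gamma_{\rho,k}$ is entire (Proposition~\ref{prop:order}). The case $\rho=0$ is trivial: then $\Gamma=x$ and $\D^Nx=0$.

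The only step needing care is the factorization $k\vartheta+Nr-k\mapsto N(kq+r)$ on the exponent $Nq+1$. It is this step that produces the normalizing constant $N^{k-1}$ and explains the particular shifts $Nr-k$, so I would check it first, together with the cancellation $(kq+k)/(q+1)=k$, which accounts for the factor $k$ in front of $2^k\rho$.
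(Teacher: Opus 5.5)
Your proposal is correct and follows essentially the same route as the paper's proof. Both compare coefficients of $x^{Nq+1}$, use the diagonal action of $\vartheta$ together with the factorization $k(Nq+1)+Nr-k=N(kq+r)$, and conclude that the two contributions cancel by exactly the coefficient ratio \eqref{eq:ratio}.
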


\begin{proof}
For the coefficients above,
\begin{equation}\label{eq:ratio}
 \frac{c_{q+1}}{c_q}
 =-k2^k\rho\,
 \frac{\prod_{s=1}^{k-1}(kq+s)}
 {\prod_{\ell=2}^{2k}(Nq+\ell)}.
\end{equation}
Apply the operator in \eqref{eq:mainode} to
$\sum_{q\geq0}c_qx^{Nq+1}$.  The coefficient of $x^{Nq+1}$ coming from
$\D^N$ is
\[
 N^{k-1}c_{q+1}\frac{(N(q+1)+1)!}{(Nq+1)!},
\]
whereas the Euler factors contribute
\[
 k2^k\rho\,c_q
 \prod_{r=1}^{k-1}\bigl[k(Nq+1)+Nr-k\bigr].
\]
Since
\[
 k(Nq+1)+Nr-k=N(kq+r),
\]
the Euler contribution is
\[
 k2^k\rho\,N^{k-1}c_q
 \prod_{r=1}^{k-1}(kq+r).
\]
Moreover,
\[
 \frac{(N(q+1)+1)!}{(Nq+1)!}
 =\prod_{\ell=2}^{2k}(Nq+\ell).
\]
Equation \eqref{eq:ratio} therefore says exactly that the derivative and
Euler contributions cancel.  Hence every coefficient vanishes.
\end{proof}

This completes the proof of Theorem \ref{thm:main}.

\section{Hypergeometric form and low-degree cases}

The coefficient ratio \eqref{eq:ratio} places the solution in the classical
generalized hypergeometric class.  With
\[
 z=-\frac{2^kk^k}{N^N}\rho x^N,
\]
one has
\begin{equation}\label{eq:hyp}
 \Gamma_{\rho,k}(x)
 =x\,{}_{k-1}F_{2k-2}\!\left(
 \begin{matrix}
  \frac1k,\frac2k,\ldots,\frac{k-1}{k}\\[1mm]
  \left\{\frac{j}{N}:2\leq j\leq N+1,\ j\ne N\right\}
 \end{matrix};z\right).
\end{equation}
Here the braces in the lower row denote the indicated list of $2k-2$
parameters.  Formula \eqref{eq:hyp} also makes differential finiteness
immediate in the standard sense of \cite{Stanley}; equation
\eqref{eq:mainode} gives the particularly transparent
operator adapted to the original variable.

For $k=2$, the quadratic boundary gives
\[
 \Gamma_{\rho,2}(x)
 =x\,{}_1F_2\!\left(\frac12;\frac23,\frac43;
 -\frac{16\rho x^3}{27}\right),
\]
and
\[
 \bigl[3\D^3+8\rho(2\vartheta+1)\bigr]\Gamma_{\rho,2}=0.
\]
For $k=3$,
\[
 \Gamma_{\rho,3}(x)
 =x\,{}_2F_4\!\left(\frac13,\frac23;
 \frac25,\frac35,\frac45,\frac65;
 -\frac{216\rho x^5}{3125}\right),
\]
and
\[
 \bigl[25\D^5+24\rho(3\vartheta+2)(3\vartheta+7)\bigr]
 \Gamma_{\rho,3}=0.
\]
For the quartic boundary,
\[
 \Gamma_{\rho,4}(x)
 =x\sum_{q\geq0}\frac{(-16\rho x^7)^q}{q!}
 \frac{(4q)!}{(7q+1)!},
\]
and the direct order-four guess is replaced by
\begin{equation}\label{eq:quartic7}
 \bigl[343\D^7+64\rho(4\vartheta+3)
 (4\vartheta+10)(4\vartheta+17)\bigr]\Gamma_{\rho,4}=0.
\end{equation}
Equivalently,
\[
 343\Gamma^{(7)}+4096\rho x^3\Gamma^{(3)}
 +43008\rho x^2\Gamma''+99072\rho x\Gamma'
 +32640\rho\Gamma=0.
\]
Thus the quartic obstruction is resolved by an uplift in differential order,
not by a correction inside the original sparse order-four ansatz.

For comparison, Table~\ref{tab:odes} records the first four members of the
hierarchy in fully expanded form.  In each row $G$ denotes
$\Gamma_{\rho,k}$.

\begin{table}[ht]
\centering
\caption{The first four constant-flux differential equations.}
\label{tab:odes}
\renewcommand{\arraystretch}{1.55}
\small
\begin{tabular}{c c >{\raggedright\arraybackslash}p{0.72\textwidth}}
\hline
$k$ & order & differential equation \\
\hline
$2$ & $3$ &
$3G^{(3)}+16\rho xG'+8\rho G=0$ \\
$3$ & $5$ &
$25G^{(5)}+216\rho x^2G''+864\rho xG'+336\rho G=0$ \\
$4$ & $7$ &
$343G^{(7)}+4096\rho x^3G^{(3)}+43008\rho x^2G''
+99072\rho xG'+32640\rho G=0$ \\
$5$ & $9$ &
$6561G^{(9)}+100000\rho x^4G^{(4)}
+2000000\rho x^3G^{(3)}+11440000\rho x^2G''
+19520000\rho xG'+5674240\rho G=0$ \\
\hline
\end{tabular}
\end{table}

The last equation is equivalently
\[
 \left[6561\D^9+160\rho(5\vartheta+4)(5\vartheta+13)
 (5\vartheta+22)(5\vartheta+31)\right]\Gamma_{\rho,5}=0.
\]

\section{An explicit compatibility-matrix calculation}\label{sec:matrixcheck}

The preceding proof obtains the differential hierarchy from the coefficient
ratio of the inverse construction.  We now reconnect it with the direct
compatibility-matrix mechanism of \cite{HDVMatrix}.  One complete example is
more informative than several large determinants, so we treat the quadratic
boundary in detail.

For $k=2$, the stationary operator in Table~\ref{tab:odes} is
\[
 L=3\D^3+16\rho x\D+8\rho.
\]
Using
\[
 P_txP_{-t}=x+t\D,
 \qquad P_t\D P_{-t}=\D,
\]
its transport is
\begin{equation}\label{eq:Nquadratic}
 N_t=P_tLP_{-t}
 =3\D^3+16\rho t\D^2+16\rho x\D+8\rho.
\end{equation}
Since $\D x=x\D+1$, successive left differentiation gives
\begin{align}
 \D N_t
 & =3\D^4+16\rho t\D^3+16\rho x\D^2+24\rho\D,
 \label{eq:DNquadratic}\\
 \D^2N_t
 & =3\D^5+16\rho t\D^4+16\rho x\D^3+40\rho\D^2.
 \label{eq:D2Nquadratic}
\end{align}

Now set $f(t)=\rho t^2$.  The first three homogeneous boundary operators,
with the same harmless row normalizations used in the compatibility-matrix
construction, are
\begin{align}
 P_1={}&\D^2+4\rho t\D,\label{eq:P1quadratic}\\
 P_2={}&\D^4+8\rho t\D^3+16\rho^2t^2\D^2+8\rho\D,
 \label{eq:P2quadratic}\\
 P_3={}&\frac12\D^6+6\rho t\D^5+24\rho^2t^2\D^4
 \notag\\
 &\quad +(12\rho+32\rho^3t^3)\D^3+48\rho^2t\D^2.
 \label{eq:P3quadratic}
\end{align}
They follow by differentiating $v(t,f(t))=0$ repeatedly along the curve and
using $v_t=\D^2v/2$.

Restrict \eqref{eq:Nquadratic}--\eqref{eq:D2Nquadratic} to
$x=\rho t^2$.  Since $v(t,f(t))=0$, the zeroth-order term can be discarded.
With columns ordered by the descending boundary jet
\[
 \mathbf V=
 \bigl(v_{xxxxxx},v_{xxxxx},v_{xxxx},v_{xxx},v_{xx},v_x\bigr)^{\mathsf T}
 \big|_{x=\rho t^2},
\]
and rows ordered as
\[
 \gamma_f(\D^2N_t),\ P_3,\ \gamma_f(\D N_t),\ P_2,
 \ \gamma_f(N_t),\ P_1,
\]
the compatibility system is $M_3[\rho t^2]\mathbf V=0$, where
\begin{equation}\label{eq:M3quadratic}
\boxed{
M_3[\rho t^2]=
\begin{pmatrix}
0&3&16\rho t&16\rho^2t^2&40\rho&0\\
\frac12&6\rho t&24\rho^2t^2&12\rho+32\rho^3t^3&48\rho^2t&0\\
0&0&3&16\rho t&16\rho^2t^2&24\rho\\
0&0&1&8\rho t&16\rho^2t^2&8\rho\\
0&0&0&3&16\rho t&16\rho^2t^2\\
0&0&0&0&1&4\rho t
\end{pmatrix}.}
\end{equation}

\begin{proposition}[Gaussian compatibility check]\label{prop:matrixcheck}
The matrix \eqref{eq:M3quadratic} is singular identically in $(t,\rho)$.
More precisely, Gaussian elimination produces a zero final row.
\end{proposition}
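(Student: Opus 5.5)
The quickest route is explicit elimination. It should already succeed inside the lower $4\times 4$ block, which is formed by rows $3$--$6$ (the rows $\gamma_f(\D N_t)$, $P_2$, $\gamma_f(N_t)$, $P_1$) and supported on the last four columns $(v_{xxxx},v_{xxx},v_{xx},v_x)$. The two top rows, coming from $\D^2N_t$ and $P_3$, should play no role: producing a single zero row by combining rows $3$--$6$ with coefficients polynomial in $(t,\rho)$ already forces $\det M_3[\rho t^2]=0$ identically.

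The elimination proceeds in three steps, each chosen to kill the current leading column.
\begin{itemize}
\item Since row $4$ has leading entry $1$ in the $v_{xxxx}$ column and row $3$ has $3$ there, I form row$_3-3\,$row$_4$. This should leave $(0,0,0,-8\rho t,-32\rho^2t^2,0)$. The $v_x$ entry cancels because $24\rho-3\cdot 8\rho=0$.
\item I add $\tfrac{8\rho t}{3}\,$row$_5$ to clear the $v_{xxx}$ entry. The $v_{xx}$ entry becomes $\bigl(-32+\tfrac{128}{3}\bigr)\rho^2t^2=\tfrac{32}{3}\rho^2t^2$, and the $v_x$ entry becomes $\tfrac{128}{3}\rho^3t^3$.
\item I subtract $\tfrac{32}{3}\rho^2t^2\,$row$_6$. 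This kills the $v_{xx}$ entry and changes the $v_x$ entry by $-\tfrac{32}{3}\rho^2t^2\cdot4\rho t=-\tfrac{128}{3}\rho^3t^3$, leaving an exactly zero row.
\end{itemize}
Every multiplier is a polynomial in $(t,\rho)$ and no division by a quantity that could vanish occurs, so the conclusion holds identically in $(t,\rho)$. This is the promised zero final row after moving it to the bottom.

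As an independent conceptual check, I would also note why singularity must occur. By Theorem~\ref{thm:main}, the solution $v=V_{\rho,2}$ satisfies $L\Gamma_{\rho,2}=0$, hence $N_tv=0$, and it vanishes on $x=\rho t^2$. Consequently the boundary jet vector $\mathbf V$ lies in the kernel of $M_3[\rho t^2]$. This vector is nonzero because its last entry is $v_x=1$ by the flux identity in \eqref{eq:boundary-identities}. A matrix with a nonzero kernel vector for every $(t,\rho)$ has identically vanishing determinant.

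The only real obstacle is bookkeeping. The entries of \eqref{eq:M3quadratic} must be transcribed correctly, notably the $24\rho$ and $40\rho$ produced by commuting $\D$ past $x$. The cancellation $\tfrac{128}{3}-\tfrac{128}{3}$ in the last step is where an arithmetic slip would show up, so I would recheck that step against the kernel-vector argument above.
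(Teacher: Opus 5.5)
Your elimination is correct and is essentially the paper's proof: both use only the rows $\gamma_f(\D N_t)$, $P_2$, $\gamma_f(N_t)$, $P_1$ on the last four columns. The paper's fraction-free operations $R_6\leftarrow 3R_6-R_3$, $R_6\leftarrow 3R_6-8\rho tR_4$, $R_6\leftarrow R_6+32\rho^2t^2R_5$ produce exactly $-3$ times your linear relation, so your rational multipliers $\tfrac{8\rho t}{3}$ and $\tfrac{32}{3}\rho^2t^2$ are harmless, and your kernel-vector remark is valid but, relying on Theorem~\ref{thm:main}, is rightly kept only as a cross-check rather than as the independent verification this computation is meant to supply.
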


\begin{proof}
First permute the rows into the order
\[
 R_2,R_1,R_3,R_5,R_6,R_4.
\]
The first five rows are then already in echelon position.  Apply to the last
row the three fraction-free operations
\[
 R_6\longleftarrow3R_6-R_3,
 \qquad
 R_6\longleftarrow3R_6-8\rho tR_4,
 \qquad
 R_6\longleftarrow R_6+32\rho^2t^2R_5.
\]
The successive last rows are
\[
 (0,0,0,8\rho t,32\rho^2t^2,0),
\]
\[
 (0,0,0,0,-32\rho^2t^2,-128\rho^3t^3),
 \qquad (0,0,0,0,0,0).
\]
Thus the resulting upper-triangular matrix is
\[
\begin{pmatrix}
\frac12&6\rho t&24\rho^2t^2&12\rho+32\rho^3t^3&48\rho^2t&0\\
0&3&16\rho t&16\rho^2t^2&40\rho&0\\
0&0&3&16\rho t&16\rho^2t^2&24\rho\\
0&0&0&3&16\rho t&16\rho^2t^2\\
0&0&0&0&1&4\rho t\\
0&0&0&0&0&0
\end{pmatrix}.
\]
Consequently,
\[
 \det M_3[\rho t^2]=0.
\]
The calculation uses no division by $t$ or $\rho$, so the identity holds at
$t=0$ and $\rho=0$ as well.
\end{proof}

This is an independent direct verification of the operator discovered by
the inverse construction: only the proposed stationary operator and the
prescribed curve enter the matrix.  The cubic case produces a
$10\times10$ matrix and exact symbolic elimination again gives a zero
determinant.  The quartic and quintic cases have sizes $14$ and $18$;
displaying them would obscure rather than clarify the mechanism, and
Theorem~\ref{thm:main} already proves compatibility for every $k\geq2$.

\section{Nonuniqueness and the minimal-order question}

The hierarchy \eqref{eq:mainode} is attached to the constant-flux selection,
not to the boundary alone.  This distinction is essential.  The inverse
kernel $\mathcal K(\rho t^k)$ contains many formal elements, and distinct
normalizations can have different differential properties
\cite{HDVInverse}.  In low degrees, the same quadratic and cubic boundaries
also arise from stationary equations of orders two and three, respectively,
whereas the present construction produces orders three and five.
Consequently, the phrase ``the differential order of a boundary'' is not
intrinsic unless a selection principle is specified.

There are at least two different minimality problems.

\begin{question}
For fixed $k$ and $\rho\ne0$, what is the minimal order of a
polynomial-coefficient differential operator annihilating the particular
constant-flux function $\Gamma_{\rho,k}$?
\end{question}

\begin{question}
What is the minimal differential order attained by any nonzero analytically
admissible element of $\mathcal K(\rho t^k)$?
\end{question}

Theorem \ref{thm:main} supplies upper bounds for both questions, but it does
not answer either one.  More broadly, it suggests a change in viewpoint:
instead of guessing a stationary ODE of the same order as the degree of the
desired boundary, first prescribe a boundary trace and flux, solve the
inverse heat problem, and then extract the differential equation satisfied
by the selected initial function.

\section{Conclusion}

For every monomial curve $x=\rho t^k$, the inverse heat problem with zero
trace and unit spatial flux has an explicit entire solution.  Its initial
function is hypergeometric and satisfies a polynomial-coefficient ODE of
order $2k-1$.  The quartic case is the first instance in which this mechanism
decisively departs from the naive order-$k$ guess: the correct constant-flux
selection is governed by an order-seven equation.

The conclusion is not that $2k-1$ is always the least possible order.
Rather, the prescribed boundary determines an inverse kernel, and a further
selection principle determines which differential structure becomes
visible.  This separation between boundary geometry and stationary
realization appears to be the organizing principle behind the hierarchy.

\end{document}